\documentclass[a4paper]{amsart}
\usepackage{hyperref}
\usepackage[utf8]{inputenc}
\usepackage{txfonts, amsmath,amstext,amsthm,amscd,amsopn,verbatim,amssymb, amsfonts}
\usepackage{fullpage}
\usepackage{todonotes}
\usepackage{mathtools}

\usepackage[bbgreekl]{mathbbol}
\usepackage{enumerate}

\usepackage{float}
\restylefloat{table}

\usepackage{tikz}
\usepackage{tikz-cd}
\usetikzlibrary{matrix}
\usetikzlibrary{shapes}
\usetikzlibrary{arrows}
\usetikzlibrary{calc,3d}
\usetikzlibrary{decorations,decorations.pathmorphing,decorations.pathreplacing,decorations.markings}
\usetikzlibrary{through}

\tikzset{snakeit/.style={decorate, decoration={snake, amplitude=.2mm,segment length=1mm}}}

\tikzset{ext/.style={circle, draw,inner sep=1pt}, int/.style={circle,draw,fill,inner sep=2pt},nil/.style={inner sep=1pt}}
\tikzset{cy/.style={circle,draw,fill,inner sep=2pt},scy/.style={circle,draw,inner sep=2pt},scyx/.style={draw,cross out,inner sep=2pt},scyt/.style={draw,regular polygon,regular polygon sides=3,inner sep=0.95pt}}
\tikzset{exte/.style={circle, draw,inner sep=3pt},inte/.style={circle,draw,fill,inner sep=3pt}}
\tikzset{diagram/.style={matrix of math nodes, row sep=3em, column sep=2.5em, text height=1.5ex, text depth=0.25ex}}
\tikzset{diagram2/.style={matrix of math nodes, row sep=0.5em, column sep=0.5em, text height=1.5ex, text depth=0.25ex}}
\tikzset{rowcolsep/.style={column sep=.2cm, row sep=.1cm}}
\tikzset{cross/.style={cross out,draw}}
\tikzset{every loop/.style={draw}}

\tikzset{
  crossed/.style={
    decoration={markings,mark=at position .5 with {\arrow{|}}},
    postaction={decorate},
    shorten >=0.4pt}}

\tikzset{every picture/.style={baseline=-.65ex} }

\newcommand{\tp}{{
\begin{tikzpicture}[baseline=-.55ex,scale=.2, every loop/.style={}]
 \node[circle,draw,fill,inner sep=.5pt] (a) at (0,0) {};
 \draw (a) edge[loop] (a);
\end{tikzpicture}}}

\theoremstyle{plain}
  \newtheorem{thm}{Theorem}
  \newtheorem{defi}[thm]{Definition}
  \newtheorem{prop}[thm]{Proposition}
  
  \newtheorem{cor}[thm]{Corollary}
  
  \newtheorem{lemma}[thm]{Lemma}
\theoremstyle{definition}
  \newtheorem{ex}{Example}

\newcommand{\FreeLie}{\mathrm{FreeLie}}

\newcommand{\FM}{\mathsf{FM}}

\newcommand{\bpm}{\begin{pmatrix}}
\newcommand{\epm}{\end{pmatrix}}

\newcommand{\Aut}{\mathrm{Aut}}
\newcommand{\GC}{\mathsf{GC}}

\newcommand{\HGC}{\mathsf{HGC}}

\newcommand{\bbS}{\mathbb{S}}

\DeclareMathOperator{\sgn}{sgn}
\DeclareMathOperator{\gr}{gr}

\newcommand{\Q}{\mathbb{Q}}

\newcommand{\Sp}{\mathrm{Sp}}

\newcommand{\Diff}{\mathrm{Diff}}

\newcommand{\osp}{\mathfrak{osp}}

\newcommand{\ft}{\mathfrak{t}}

\newcommand{\GCex}{\GC^\ex}

\renewcommand{\ex}{{\mathrm{ex}}}

\newcommand{\GL}{\mathrm{GL}}

\DeclareMathOperator{\Exp}{Exp}
\DeclareMathOperator{\coker}{coker}

\DeclareMathOperator{\Res}{Res}
\newcommand{\SympDer}{\mathrm{Der}^\theta(\mathbb L(\Q^{2g}))}

\author{Florian Naef}
\address{School of Mathematics\\
Trinity College\\
Dublin 2, Ireland, D02 PN40}
\email{naeff@tcd.ie}

\author{Thomas Willwacher}
\address{Department of Mathematics\\ ETH Zurich\\ R\"amistrasse 101 \\ 8092 Zurich, Switzerland}
\email{thomas.willwacher@math.ethz.ch}

\begin{document}
\title{The Johnson homomorphism, embedding calculus and graph complexes}

\thanks{
  T.W. has been partially supported by the NCCR SwissMAP, funded by the Swiss National Science Foundation.
}

\subjclass[2020]{81Q30, 18G35, 18G85}
\keywords{Graph complexes, Diffeomorphism groups of manifolds}


\begin{abstract}
We explain how the Johnson homomorphism and the Enomoto-Satoh trace, as well as higher-loop-order generalizations, can be obtained from graph complexes originating in the Goodwillie-Weiss calculus.
This paper can be seen as an addendum to our work \cite{FNW}.
It contains little new mathematical content, but is intended to give an overview of a different viewpoint on the Johnson homomorphism, for experts working mainly in the latter area.
\end{abstract}

\maketitle

\section{Introduction}
This paper is an addendum to our recent work \cite{FNW} (joint with M. Felder). Here we explain consequences of op. cit. for the Johnson homomorphism and Johnson cokernel.
In its the most basic form the Johnson homomorphism we consider is a homomorphism of Lie algebras 
\[
J_{g,1} : \ft_{(g),1} \to \SympDer
\]
where both sides are weight-graded Lie algebras with an action of the symplectic group $\Sp(2g)$.
Here $\SympDer$ is the Lie algebra of derivations of a free Lie algebra $\mathbb L(\Q^{2g})$ generated by a $2g$-dimensional vector space, respecting the natural symplectic form. The weight grading on $\SympDer$ is the one induced from the natural grading on the free Lie algebra $\mathbb L(\Q^{2g})$ obtained by putting the generators in weight 1. The Lie algebra $\ft_{(g),1}$ arises by completion relative to $\Sp(2g)$ of the mapping class group of a punctured surface \cite{Hain}. For $g>3$ it has the following presentation (see \cite[Theorem 8.2]{HainJohnson}):\footnote{We note that our object $\ft_{(g),1}$ is denoted by $\mathfrak{u}_{g,1}$ in Hain's paper. Hain's Lie algebra $\ft_{g,1}$, which is a one-dimensional central extension, does not arise in our work. Hain explains \cite[Section 7.2]{HainJohnson} that $\ft_{g,1}$ is the rationalization of the associated graded of the Torelli group with respect to the lower central series filtration as long as $g \geq 3$.} The generators are identified with the weight 1 subspace $\gr^1 \SympDer$ and concentrated in weight 1, and the relations are defined as the kernel of the natural map $\Lambda^2 \gr^1 \SympDer \to \gr^2 \SympDer$. Using this definition of $\ft_{g,1}$, the morphism $J_{g,1}$ is just the obvious map defined by the inclusion on generators, and it is also clear that $J_{g,1}$ is an isomorphism in weights 1 and 2.

Note also that the term \emph{weight} is sometimes called \emph{degree} in the literature. For this paper we reserve the term degree to refer to the cohomological degree, which is different from the weight. For example, both $\ft_{g,1}$ and $\SympDer_g$ are concentrated in cohomological degree zero.

We have two main results. First, as a consequence to statements of \cite{FNW} we can show that $J_{g,1}$ is injective for sufficiently large genera.

\begin{thm}\label{thm:inj intro}[Corollary \ref{cor:injective} below]
    The Johnson homomorphism $J_{g,1}$ is injective in weights $W$ such that $g\geq 3W\geq 3$.
\end{thm}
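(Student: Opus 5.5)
The plan is to realize $J_{g,1}$ as the map induced on (weight-graded pieces of) Lie algebras by a comparison of graph complexes coming from embedding calculus, as in \cite{FNW}. On that side both $\ft_{(g),1}$ and $\SympDer$ are described by hairy graph complexes decorated by $H=H^1(\Sigma_{g,1})\cong\Q^{2g}$. $\SympDer$ corresponds to the tree part, i.e.\ hairy trees with hairs labeled by $H$. The Lie algebra relevant to $\ft_{(g),1}$ is the degree-zero cohomology of a larger decorated graph complex $\HGC$, allowing loops and with a differential that includes the pairing of decorations via the symplectic form (the ``$\theta$'' or contraction part). Under this dictionary, $J_{g,1}$ becomes the projection from that complex onto its tree (loop order zero) part.

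Step one is to take from \cite{FNW} the statement that, in the stable range, the Lie algebra $\ft_{(g),1}$, or more precisely its Koszul dual / Chevalley–Eilenberg data, is computed by this graph complex. Concretely, I expect a quasi-isomorphism, valid in weights $W$ with $g\geq 3W$, between a dg Lie model of the completed Torelli Lie algebra and $\HGC$ with $H$-decorations. The bound $g\geq 3W$ should match the range in which Hain's presentation and the relevant stability for $\Sp(2g)$-invariant theory apply. In weight $W$, at most $2W$ or $3W$ decorations appear, so for $g\ge 3W$ the symplectic invariant theory is free of relations.

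Step two is to compute $H^0$ of the decorated graph complex in weight $W$. I would filter by loop order: the tree part contributes $\gr^W\SympDer$, and the higher-loop parts contribute extra classes. I would show that the map on $H^0$ to the tree part is injective by arguing that positive-loop-order contributions in degree zero cannot lie in the kernel. The intended argument is that each higher-loop class maps via the twisting differential nontrivially, or else that graphs with loops sit in degrees $\neq 0$ in the relevant range. Equivalently, I would check that in the spectral sequence for the loop-order filtration, nothing in cohomological degree zero at positive loop order survives to $E_\infty$ in the kernel direction. Dually, the statement is surjectivity of the cotree map on $H^0$ of the dual complex. The condition $W\geq 1$ just excludes the trivial weight.

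The main obstacle is this degree count. One must show that the hairy graphs with loops have no degree-zero cohomology that is killed by the projection to trees. Equivalently, one must show that the kernel of $\ft_{(g),1}\to\SympDer$, which is known to be nonzero in low weight for the one-dimensional central extension $\ft_{g,1}$ but not for $\ft_{(g),1}$, vanishes. My first attempt would be to compute degrees directly: each internal vertex and edge shifts degree in a fixed way, and each loop raises the degree by a fixed amount. That should force degree-zero classes to be trees, after the stable-range identification of \cite{FNW} has removed the extra relations coming from small $g$.
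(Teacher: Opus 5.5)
Your overall plan matches the paper's. You use the stable-range isomorphism $\ft_{(g),1}\cong H^0(\GC_{(g),1})$ of \cite{FNW} in weight $W$, which is available for $g\geq 3W\geq 6$; the remaining case $W=1$ is trivial, since $J_{g,1}$ is the identity on generators. You note that $J_{g,1}$ is then induced by the projection $\GC_{(g),1}\to\GC_{(g),1}^{\leq 0}$ onto trees, and you filter by loop order. You also correctly identify what must be shown. The kernel of $H^0(\GC_{(g),1})\to H^0(\GC_{(g),1}^{\leq 0})$ is the image of $H^0(\GC_{(g),1}^{\geq 1})$, so it suffices that $H^0(\GC_{(g),1}^{=\ell})=0$ for every $\ell\geq 1$. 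The paper phrases this through the long exact sequences of $0\to\GC_{(g),1}^{=\ell+1}\to\GC_{(g),1}^{\leq \ell+1}\to\GC_{(g),1}^{\leq \ell}\to 0$. It then composes the resulting monomorphisms over the finitely many loop orders $\ell\leq W/2+1$ that occur in weight $W$.

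The gap is that you give no valid argument for this vanishing, and the one you suggest, a direct degree count on graphs, fails. Use the degree conventions the paper forces: the tree part has cohomology $\SympDer$ in degree $0$, carried by trivalent trees, and both $\delta_{split}$ and $\delta_{glue}$ have degree $+1$. Then a graph of loop order $\ell$ with $v$ vertices and $r$ decorations has degree $v-\ell-r+2$.

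Adding a loop raises only the \emph{top} degree, which is attained by trivalent graphs ($v=2\ell-2+r$) and equals $\ell$. Graphs with higher-valent vertices have lower degree, and reach zero and even negative degrees once there are enough decorations. For example, a triangle whose corners carry $1,1,2$ decorations (one four-valent vertex) is a one-loop graph of weight $4$ in degree $0$. Likewise, the weight-$6$ two-loop class displayed in the paper has a four-valent vertex and sits in degree $1$ rather than $2$.

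So degree-zero chains of positive loop order exist. The vanishing of $H^0(\GC_{(g),1}^{=\ell})$ for $\ell\geq 1$ is therefore a genuinely cohomological statement, not visible on chains. For $\ell=1$ it is the fact that one-loop cohomology consists of dihedral words in degree $1$. In general the paper imports it from \cite[Section 6]{FNW}. There one has $H^k(\GC_{(g),1}^{=\ell})=0$ for $k<0$, and equality $k=0$ in this bound can only be realized by trees. Together these give $H^k(\GC_{(g),1}^{=\ell})=0$ for all $k\leq 0$ once $\ell\geq 1$.

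Your fallback formulations are restatements of the conclusion rather than arguments:
\begin{itemize}
\item ``each higher-loop class maps nontrivially under the twisting differential'';
\item ``nothing in degree zero at positive loop order survives in the kernel direction'';
\item ``the kernel of $\ft_{(g),1}\to\SympDer$ vanishes''.
\end{itemize}
The missing ingredient is precisely this cohomological degree bound.
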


Theorem \ref{thm:inj intro} improves upon earlier results by Kupers and Randal-Williams, who showed that the kernel of $J_{g,1}$ consists of trivial $\Sp(2g)$-representations (stably), see \cite[Theorem B and Section 8]{KRWnew}.


The Johnson cokernel is the cokernel of the morphism $J_{g,1}$.
There has been an interest in the literature in computing $\coker J_{g,1}$, and in particular its decomposition into irreducible $\Sp(2g)$-representations \cite{Conant, ConantKassabov, KunoSato}.
This decomposition is in principle already known, since the $\Sp(2g)$-representations on $\ft_{g,1}$ and $\SympDer_g$ may be explicitly computed with the methods of \cite{GaroufalidisGetzler}.
Our graphical methods however also provide a very explicit expression for the decomposition in terms of other well-known numbers, namely the top weight equivariant Euler characteristics of moduli spaces of curves, or equivalently the Euler characteristics of the Kontsevich (hairy) graph complex.
To define them, let $[\lambda]_{S}$ be the irreducible representation of the symmetric group $S_{|\lambda|}$ corresponding to a Young diagram $\lambda$.
Let $\alpha^k_{g,\lambda}$ be the multiplicity of $[\lambda]_{S}$ in the decomposition of the top weight cohomology $\gr_{weight}^{top} H^k(\mathcal M_{g,|\lambda|})$ into irreducible representations of $S_{|\lambda|}$.
Then we define the integers
\begin{equation}\label{equ:chidef}
\chi_{g,\lambda} := \sum_k (-1)^k \alpha^k_{g,\lambda},
\end{equation}
which we slightly abusively call equivariant Euler characterstics.
A formula for $\chi_{g,\lambda}$ has been conjectured by Zagier and later shown by Songhafouo-Tsopm\'ene--Turchin \cite{ST2} and independently by Chan--Faber--Galatius--Payne \cite{CFGP}. Since the formula is somewhat bulky we will not reproduce it here, but refer for example the main Theorem of \cite{CFGP} or the very end of the appendix of \cite{BorinskyZagier} for the expression. 

Furthermore, for a Young diagram $\lambda$ with at most $2g$ rows let $[\lambda]_{\GL(2g)}$ be the corresponding irreducible representation of $\GL(2g)$, and for a Young diagram $\mu$ with at most $g$ rows let  $[\mu]_{\Sp(2g)}$ be the corresponding irreducible representation of $\Sp(2g)$. Let $N_{\mu\lambda}$ be the integers appearing in the branching rules from $\GL(2g)$ to $\Sp(2g)$, i.e., 
\[
\Res_{\GL(2g)}^{\Sp(2g)} [\lambda]_{\GL(2g)} \cong  \bigoplus_\mu N_{\mu\lambda} [\mu]_{\Sp(2g)},
\]
see \cite[Equation 25.39]{FultonHarris}.
Then we have the following result.
\begin{thm}\label{thm:intro euler}
    For any $W\in \mathbb N$ and $g$ such that $g\geq 3W\geq 6$ we have the following decomposition into irreducible $\Sp(2g)$-representations 
    \[
    \gr^W \coker J_{g,1} \cong \bigoplus_\mu M_{\mu}^W \cdot [\mu]_{\Sp(2g)}
    \]
    with the $g$-independent multiplicities
    \begin{equation}\label{equ:MmuW}
    M_{\mu}^W = 
    \sum_{1\leq h \leq \frac W2 +1}\sum_{\lambda \atop 
    |\lambda| +2h=W+2 } (-1)^{|\lambda|} N_{\mu\lambda^T} \,\, \chi_{h,\lambda}.
    \end{equation}
    Here $\chi_{h,\lambda}$ is the $\lambda$-component of the equivariant top weight Euler characteristic of the moduli space of curves $\mathcal M_{h,|\lambda|}$.
\end{thm}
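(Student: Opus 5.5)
The plan is to realize both $\ft_{(g),1}$ and $\SympDer$ graph-complex-theoretically, following \cite{FNW}, and to identify $\coker J_{g,1}$ in the stable range with the part of the hairy graph complex cohomology of loop order $h\geq 1$. Concretely, $\SympDer$ in weight $W$ is identified (for $g$ large relative to $W$) with the space of tree-level hairy graphs, i.e. trivalent trees with $W+2$ hairs labelled by $H=\Q^{2g}$, taken with the appropriate symmetry; this is the classical description $\gr^W\SympDer\cong (\mathrm{Lie}((W+2))\otimes H^{\otimes (W+2)})_{S_{W+2}}$ up to the usual twist. On the other side, the results of \cite{FNW} describe $\ft_{(g),1}$ (in the stable range $g\geq 3W$) via the tree part of a hairy graph complex $\HGC$ decorated by $H$, and higher-loop hairy graphs appear as an obstruction/extension term. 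In the resulting long exact sequence, using the injectivity of Theorem \ref{thm:inj intro} (Corollary \ref{cor:injective}), I expect $\gr^W\coker J_{g,1}$ to be identified with $\bigoplus_{h\geq 1} H(\HGC_{h})$ in the component with $W+2-2h$ hairs, coloured by $H$ and placed in the relevant degree. The count $|\lambda|+2h=W+2$ is the bookkeeping: a graph of loop order $h$ with $r$ hairs contributes to weight $r+2h-2$.

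Next, I would compute the Euler characteristic instead of the cohomology. Since $\gr^W\coker J_{g,1}$ is concentrated in one degree, its class in the representation ring equals the Euler characteristic of the whole relevant (finite-dimensional in each weight) piece of the complex, up to a global sign. The $H$-coloured hairy graph complex decomposes as $\bigoplus_r (\HGC_{h,r}\otimes H^{\otimes r})_{S_r}$, so by Schur--Weyl its Euler characteristic is $\sum_\lambda \chi(\HGC_{h,r})_{\lambda}\cdot [\lambda^{(T)}]_{\GL(2g)}$. The transpose $\lambda^T$ and the sign $(-1)^{|\lambda|}$ arise because hairs are odd in the relevant convention (hair labels carry a sign twist), so the $S_r$-isotypic component $\lambda$ of graphs pairs with the $\GL$-irrep $\lambda^T$. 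Then I use the known identification of the $S_r$-equivariant Euler characteristic of the Kontsevich hairy graph complex with the top weight Euler characteristic of $\mathcal M_{h,r}$ (Chan--Galatius--Payne), giving $\chi_{h,\lambda}$. Finally I restrict from $\GL(2g)$ to $\Sp(2g)$ with the branching coefficients $N_{\mu\lambda^T}$; since $g\geq 3W$ and $|\lambda|\leq W$, all diagrams have few rows, so the branching is in the stable range, and the multiplicities come out $g$-independent, giving \eqref{equ:MmuW}. The range of $h$ is $1\le h\le W/2+1$ because $|\lambda|\geq 0$.

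The main obstacle is the first step, namely making the identification of $\coker J_{g,1}$ with the loop order $\geq1$ hairy graph cohomology precise, including concentration in a single degree in the stable range so that the Euler characteristic actually computes the representation. I would try to obtain this from the spectral sequence or long exact sequence of \cite{FNW} relating the tree part and the full hairy graph complex, with vanishing in the other degrees coming from the stable-range estimates. A secondary difficulty is getting the signs right, namely the transposes and the $(-1)^{|\lambda|}$; I would fix them by checking against the low-weight case $W=2$, where the cokernel is known to be zero, and against the Enomoto--Satoh trace at $h=1$.
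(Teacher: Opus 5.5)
Your plan is essentially the paper's proof. The long exact sequence of $0\to\GC_{(g),1}^{\geq 1}\to\GC_{(g),1}\to\GC_{(g),1}^{\leq 0}\to 0$, combined with Theorem \ref{thm:FNW} and the injectivity of $J_{g,1}$, gives $\gr^W\coker J_{g,1}\cong\gr^W H^1(\GC_{(g),1}^{\geq 1})$ with no cohomology in other degrees (Corollary \ref{cor:coker}). One then splits the Euler characteristic by loop order and uses $\gr^{2\ell+r-2}\GC_{(g),1}^{=\ell}[1]\cong\HGC_2^{=\ell}(r)\otimes_{S_r}H_1(\Sigma_g)^{\otimes r}$, the comparison of \cite{CGP2}, Schur--Weyl duality and branching.

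Two corrections:
\begin{enumerate}
\item \emph{The object computing the cokernel.} $\ft_{(g),1}$ is $H^0$ of the full complex $\GC_{(g),1}$; it is the tree quotient that gives $\SympDer$. The cokernel is therefore the cohomology of $\GC_{(g),1}^{\geq 1}$ with the full differential $\delta_{split}+\delta_{glue}$. It is not $\bigoplus_{h\geq 1}H(\HGC_h)$, which is only the first page of the loop order spectral sequence. That page is not concentrated in one degree and is not equal to the cokernel: for example, the $[1^2]_{\Sp}$ in $\gr^4H^1(\GC_{(g),1}^{=1})$ cancels against $\gr^4H^2(\GC_{(g),1}^{=2})$. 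This slip is harmless only because you pass to Euler characteristics.
\item \emph{Signs and transposes.} The $\lambda^T$ and $(-1)^{|\lambda|}$ should be read off from the $\sgn_r$ twist and the degree shift in the comparison of \cite{CGP2}, not fixed by examples. In particular, your proposed $W=2$ check would be vacuous, since all relevant $\chi_{h,\lambda}$ vanish there.
\end{enumerate}
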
 

We note that we similarly have the formula 
\[
\gr^W \ft_{g,1} \cong \sum_\mu \left( \sum_{0\leq h \leq \frac W2 +1}\sum_{\lambda \atop 
|\lambda| +2h=W+2 } (-1)^{|\lambda|+1} N_{\mu\lambda^T} \,\, \chi_{h,\lambda} \right) [\mu]_{\Sp(2g)},
\]
valid as long as $g\geq 3W\geq 6$.

\subsection*{Structure of the paper}
We will introduce some preliminaries in Section \ref{sec:threelie} and in particular recall the definition of three graph complexes of \cite{FNW}. 
Our perspective on the Johnson homomorphism is then explained in Section \ref{sec:johnson}.
The main results of the introduction are shown in Sections \ref{sec:injective} and \ref{sec:euler proof}.
The concluding Section \ref{sec:embedding} sketches how the graph complexes we consider here arise in the embedding calculus applied to surfaces. It provides a motivation and broader perspective but is strictly speaking not required for the remainder of the paper.

\section{Three dg Lie algebras of graphs -- recollections from \cite{FNW}}
\label{sec:threelie}
\subsection{Definition}
The graph complexes $\GCex_{(g)}$, $\GC_{(g),1}$ and $\GC_{(g),1}^\tp$ are defined as follows, see \cite{FNW} for details.
\begin{itemize}
\item Elements of $\GC_{(g),1}^\tp$ are $\Q$-linear series of isomorphism classes of connected, at least trivalent graphs whose vertices are decorated by elements of the first homology $H_1(\Sigma_{g,1})=H_1(\Sigma_g)$ of a genus $g$ Riemann surfaces $\Sigma_g$, or respectively its one-punctured variant $\Sigma_{g,1}$. As the superscript indicates, these graphs may have tadpoles, that is edges connecting a vertex to itself.
\[
\begin{tikzpicture}[yshift=-.5cm]
\node[int,label=180:{$\gamma$}] (v1) at (0,0) {};
\node[int] (v2) at (1,0) {};
\node[int,label=90:{$\alpha\beta$}] (v3) at (0,1) {};
\node[int] (v4) at (1,1) {};
\draw (v1) edge (v3) edge (v2) edge (v4) 
    (v4) edge[loop] (v4) edge (v2) edge (v3) 
    (v3) edge (v2);
\end{tikzpicture}
\quad\quad \text{with $\alpha,\beta,\gamma \in H_1(\Sigma_{g,1})$}
\]
The differential on these complexes has two terms, $\delta=\delta_{split}+\delta_{glue}$. The piece $\delta_{split}$ is defined by summing over vertices, and splitting the vertex.
  \begin{align}\label{equ:deltasplit}
    \delta_{split} \Gamma &= \sum_{v \text{ vertex} }  \pm 
    \Gamma\text{ split $v$} 
    &
    \begin{tikzpicture}[baseline=-.65ex]
    \node[int] (v) at (0,0) {};
    \draw (v) edge +(-.3,-.3)  edge +(-.3,0) edge +(-.3,.3) edge +(.3,-.3)  edge +(.3,0) edge +(.3,.3);
    \end{tikzpicture}
    &\mapsto
    \sum
    \begin{tikzpicture}[baseline=-.65ex]
    \node[int] (v) at (0,0) {};
    \node[int] (w) at (0.5,0) {};
    \draw (v) edge (w) (v) edge +(-.3,-.3)  edge +(-.3,0) edge +(-.3,.3)
     (w) edge +(.3,-.3)  edge +(.3,0) edge +(.3,.3);
    \end{tikzpicture}
  \end{align}
The piece $\delta_{glue}\Gamma$ is defined on a graph $\Gamma$ by summing over all pairs $(\alpha,\beta)$ of $H_1(\Sigma_{g,1})$-decorations in the graph $\Gamma$, replacing the pair of decorations by an edge, and multiplying the graph with the numeric prefactor $\langle \alpha,\beta\rangle$, using the canonical pairing $\langle -,-\rangle: H_1(\Sigma_{g,1})\times H_1(\Sigma_{g,1}) \to \Q$.
\item The complex 
\[
  \GC_{(g),1} = \GC_{(g),1}^\tp / I_g^{\tp}
\]
is the quotient obtained by setting all graphs with tadpoles to zero.
\item The complex $\GC_{(g)}$ is defined similarly to $\GC_{(g),1}$, except for three differences. First, one decorates vertices by $\bar H_\bullet(\Sigma_{g})$ instead of $\bar H_\bullet(\Sigma_{g,1})$. Second, the piece of the differential $\delta_{glue}$ uses the pairing 
\begin{equation}\label{equ:Hg pairing}
\langle -,-\rangle: H_\bullet(\Sigma_{g})\times H_\bullet(\Sigma_{g}) \to \Q
\end{equation}
instead.

Third, there is an additional piece of the differential $\delta_Z$ that glues a new vertex to a decoration with the top class $\omega\in H_{2}(\Sigma_{g})$. The new vertex is then decorated by the canonical diagonal element $\Delta_1\in H_1(\Sigma_{g})\otimes H_1(\Sigma_{g})$.
\begin{align*}
\delta_Z :
\begin{tikzpicture}
\node[int,label=90:{$\omega$}] (v) at (0,0) {};
\draw (v) edge +(-.5,-.5) edge (0,-.5) edge (.5,-.5);
\end{tikzpicture}
\mapsto 
\begin{tikzpicture}
  \node[int] (v) at (0,0) {};
  \node[int,label=90:{$\Delta_1$}] (w) at (0,.7) {};
  \draw (v) edge +(-.5,-.5) edge (0,-.5) edge (.5,-.5) (v) edge (w);
  \end{tikzpicture}
\end{align*}
\end{itemize}

We refer to \cite{FNW} for more precise definitions, including signs, prefactors and degrees. 
All three of the complexes above are in fact dg Lie algebras, with the Lie brackets defined similarly to $\delta_{glue}$ above, just operating on a pair of decorations on two distinct graphs.

Furthermore, one has a natural action of the symplectic group $\Sp(2g)$
on all three dg Lie algebras considered.
Moreover, $\GC_{(g)}$ may naturally be extended by a nilpotent, negatively graded Lie algebra $\osp^{nil}_g$ of endomorphisms of $H_\bullet(\Sigma_g)$ that respect the pairing \eqref{equ:Hg pairing}, and we will define below an extended dg Lie algebra 
\[
  \GCex_{(g)} := (\osp_g^{nil}\ltimes \GC_{(g)}, \delta).
\]

All the graph complexes above carry a natural grading  by \emph{weight}, with the weight of a graph with $e$ edges, $v$ vertices and total (homological) decoration degree $D$ defined to be the number
\[
W = 2(e-v) + D.
\]
This positive integer valued quantity is preserved by the differentials and Lie brackets. In particular our graph complexes split into a direct product of finite dimensional subcomplexes according to weight.
We shall denote the graded piece of the complexes or cohomology of given weight $W$ by the prefix $\gr^W (\cdots)$.

Finally, let us note that the complexes $\GC_{(g),1}$ and $\GC_{(g),1}^\tp$ are almost quasi-isomorphic.

\begin{lemma}[{\cite[Theorem 1(i)]{FNW}}]
\label{lem:isom}
    The natural projection to the quotient $\GC_{(g),1}^\tp\to \GC_{(g),1}$ induces an isomorphism in cohomology in all weights $W\geq 2$.
\end{lemma}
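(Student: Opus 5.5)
We have to show that the tadpole ideal $I_g^\tp=\ker(\GC_{(g),1}^\tp\to\GC_{(g),1})$ is acyclic in all weights $W\geq 2$. Then the long exact sequence of $0\to I_g^\tp\to \GC_{(g),1}^\tp\to\GC_{(g),1}\to 0$, which splits by weight into finite-dimensional pieces, gives the claimed isomorphism. The plan is to filter $I_g^\tp$ so that on the associated graded only the part of the differential that does not interact with decorations survives. On that graded piece we then build an explicit contracting homotopy that acts locally near tadpoles.

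\textbf{Step 1 (filtration).} Filter by the number of vertices, or equivalently by the number of decorations. The term $\delta_{split}$ raises the vertex count by one, while $\delta_{glue}$ keeps the vertex count and lowers the number of decorations. In each fixed weight the complex is finite-dimensional, so the resulting spectral sequence converges. Choose the filtration so that the first page is computed by $(I_g^\tp,\delta_{split})$ alone, for instance by grading with the number of decorations, which $\delta_{split}$ preserves and $\delta_{glue}$ lowers by two. It then suffices to show that $(I_g^\tp,\delta_{split})$ is acyclic in weights $W\geq 2$.

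\textbf{Step 2 (local acyclicity at a tadpole).} Following the standard argument for Kontsevich-type complexes, which shows that graphs with tadpoles or with bivalent "antennas" form an acyclic subcomplex, I would filter once more by the complexity of the part of the graph away from the tadpoles. On the associated graded, $\delta_{split}$ only acts near a tadpole vertex. A vertex $v$ carrying a tadpole can be split so that the loop either stays at $v$ or gets pulled onto a new vertex joined to the rest by a single edge. The homotopy $h$ contracts the edge between a tadpole-bearing vertex and its neighbour whenever that neighbour is trivalent with a tadpole. Set up this way, $\delta_{split} h+h\delta_{split}$ should equal a positive multiple of the identity on each combinatorial type. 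More concretely, one records for each tadpole the "tadpole chain" of vertices attached to it and shows that the associated complex, indexed by the length of such chains, is isomorphic to the cone of an identity map.

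\textbf{Step 3 (the exceptional weight).} The argument in Step 2 breaks down only when the graph consists entirely of the tadpole piece, for example a single vertex carrying a tadpole and one decoration, or the like. These graphs have small weight: one vertex with a tadpole and decoration degree $D$ has $W=D$. I would identify the surviving classes and check that they sit in weight $W\leq 1$, which accounts for the hypothesis $W\geq 2$.

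\textbf{Main obstacle.} The hardest step is the local homotopy in Step 2. There are several difficulties. Sign and degree conventions for tadpoles are delicate, and depending on the edge parity a tadpole may even vanish by symmetry. Vertices carrying several tadpoles and decorations need care. The trivalence condition interacts with the splitting. I would first try to write $I_g^\tp$ as a complex freely generated by "tadpole-bearing hairs" attached to a core graph, and then reduce to a one-vertex computation via a Künneth-type argument.
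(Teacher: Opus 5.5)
The paper does not prove this lemma; it imports it from \cite[Theorem 1(i)]{FNW}. Your outline is a sound way to prove it: show that $I_g^\tp$ is acyclic in weights $W\geq 2$, first reducing to $(I_g^\tp,\delta_{split})$, then killing tadpoles locally. One correction to Step 1: the number of vertices is \emph{not} equivalent to the number of decorations. Filtering by vertex count would leave $\delta_{glue}$, not $\delta_{split}$, on the associated graded. The right choice is the decoration count $D$, which in fixed weight is equivalent to the loop order since $W=2\ell-2+D$. This step is genuinely needed, because the finer filtration of Step 2 is not preserved by $\delta_{glue}$. Gluing the two decorations of a vertex that has exactly one further edge creates a new trivalent tadpole vertex.

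The gap is in Steps 2--3. As written they do not locate where the local argument fails, and so they miss the one case that matters. Call a trivalent vertex carrying a tadpole a tadpole leaf and all other vertices core, and filter by the number of core vertices. On the associated graded, $\delta_{split}$ only pulls the tadpole off a core vertex of valence $\geq 4$ onto a new leaf. There are no chains, since a leaf has a single edge. Let $c_u$ count the decorations at a core vertex $u$ together with its half-edges to other core vertices, and let $m_u$ count the tadpoles at $u$ including attached leaves. A vertex carries at most one tadpole of its own, since two tadpoles at one vertex vanish by the odd-edge symmetry. With this notation, the local complex at $u$ is a two-term complex $\Q\to\Q$ with invertible differential whenever $c_u+m_u\geq 3$. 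The homotopy contracts a leaf into its \emph{tadpole-free} neighbour. In your wording the neighbour also carries a tadpole, so contraction would give a double tadpole, i.e.\ zero. The unpaired configurations are exactly those with $c_u+m_u=2$, namely two graphs. The first is $T_\alpha$, with one vertex, one tadpole and one decoration $\alpha$, in weight $1$. The second is the dumbbell, two tadpole leaves joined by an edge, which has empty core and weight $W=2$. So it is false that everything escaping Step 2 lies in $W\leq 1$. The lemma holds at $W=2$ only because the dumbbell vanishes: edges are odd, and swapping its vertices transposes the two tadpole edges. The same parity fact makes $I_g^\tp$ a subcomplex in the first place, since a tadpole split into a double edge gives zero. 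Note also that single tadpoles do not vanish here. With these points, and exactness of coinvariants over $\Q$, one gets $H(I_g^\tp,\delta_{split})\cong H_1(\Sigma_{g,1})$, spanned by the $T_\alpha$ in weight $1$, and your Step 1 then finishes the proof.
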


\subsection{Stable cohomology}
Let us suppose that $g\geq 6$ for simplicity and define three Lie algebras $\ft_{(g)}$, $\ft_{(g),1}$ and $\ft_{(g),1}^\tp$. Each is concentrated in cohomological degree zero, and comes with a positive weight grading and an action of the symplectic group $\Sp(2g)$.
The Lie algebras are defined by quadratic presentations as follows:
\begin{align*}
\ft_{(g)}&:=
\FreeLie(V_{(g)}) / \langle R_{(g)} \rangle \\
\ft_{(g),1}&:=\FreeLie(V_{(g),1}) / \langle R_{(g),1} \rangle  \\
\ft_{(g),1}^\tp&:=
\FreeLie(V_{(g),1}^\tp) / \langle R_{(g),1}^\tp \rangle,
\end{align*}
where the generators are concentrated in weight 1 and defined as a $\Sp(2g)$-modules as 
\begin{align*}
V_{(g)} &:= [1^3]_{\Sp(2g)}
&
V_{(g),1} &:= [1^3]_{\Sp(2g)} \oplus [1]_{\Sp(2g)}
&
V_{(g),1}^\tp := [1^3]_{\Sp(2g)}
\end{align*}
and the relations are subspaces of the exterior square of the generators such that 
\begin{align*}
\Lambda^2 V_{(g)} &= R_{(g)} \oplus [2^2]_{\Sp(2g)}
\\
\Lambda^2 V_{(g),1} &= R_{(g),1} 
\oplus [0]_{\Sp(2g)}
\oplus [1^2]_{\Sp(2g)}
\oplus [2^2]_{\Sp(2g)}
\\
\Lambda^2 V_{(g),1}^\tp &= R_{(g),1}^\tp 
\oplus [0]_{\Sp(2g)}
\oplus [1^2]_{\Sp(2g)}
\oplus [2^2]_{\Sp(2g)}
\end{align*}
There are then morphisms of doubly graded Lie algebras with $\Sp(2g)$-actions
\begin{equation}\label{equ:tgmap}
\begin{aligned}
\ft_{(g)}&\to H(\GC_{(g)}^{\ex}) \\
\ft_{(g),1}&\to H(\GC_{(g),1})  \\
\ft_{(g),1}^\tp&\to H(\GC_{(g),1}^\tp)
\end{aligned}
\end{equation}
by sending the generators to graphs with one vertex and three decorations in $H_1(\Sigma_g)$ in the natural way.
One of the main results of \cite{FNW} is then the following.
\begin{thm}[{\cite{FNW}, see also \cite{KRWnew}}]\label{thm:FNW}
The morphisms \eqref{equ:tgmap} are isomorphisms in weight $W$ as long as $g\geq 3W\geq 6$. Furthermore, the right-hand sides of \eqref{equ:tgmap} are concentrated in non-negative cohomological degrees for all $g\geq 2$ and all $W$, and in degree 0 as long as $g\geq W+2$.
\end{thm}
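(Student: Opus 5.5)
The plan is to treat the three cases in parallel. I would carry out the argument for $\GC_{(g),1}^\tp$ and $\GC_{(g),1}$ first, then adapt it to $\GCex_{(g)}$ by keeping track of the extra decoration $\omega$, the piece $\delta_Z$ and the factor $\osp^{nil}_g$. By Lemma \ref{lem:isom}, the statements for $\GC_{(g),1}^\tp$ and $\GC_{(g),1}$ are equivalent in weights $W\geq 2$, so in effect only two cases need work.

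\emph{Step 1: degree bounds via the loop order filtration.} I would filter each complex by the loop order (first Betti number) of the underlying graph. The differential $\delta_{split}$ preserves loop order, whereas $\delta_{glue}$ raises it by one, since it joins two decorations of the same connected graph. The associated graded complex therefore carries only $\delta_{split}$. In a fixed weight I would regard the $H_1$-decorations as hairs, which identifies this associated graded with
\[
\bigoplus_{h,r} \left(\HGC_{h,r}\otimes H_1(\Sigma_{g})^{\otimes r}\right)_{S_r},
\]
where $\HGC_{h,r}$ denotes the hairy (Kontsevich) graph complex of loop order $h$ with $r$ hairs. The cohomology of $\HGC_{h,r}$ is known to compute the top weight cohomology of $\mathcal M_{h,r}$, with the standard degree shift. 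Combining this with the vanishing of $\gr^{top}_{weight}H^k(\mathcal M_{h,r})$ below the appropriate virtual cohomological dimension, a bookkeeping of degrees against the weight $W=2(e-v)+D$ should show that the $E_1$ page lies in non-negative degrees for all $g\geq 2$. The same bookkeeping should show that it lies in degree $0$ once $g\geq W+2$, because there are only finitely many $(h,r)$ with $2h+r$ bounded by $W+2$. Since the weight pieces are finite dimensional, the spectral sequence converges, and both degree statements pass to $H(\GC)$.

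\emph{Step 2: the map from $\ft$.} The generators of $\ft$ map to one-vertex trivalent graphs carrying three $H_1$-decorations, which are cocycles. The first thing to check is that the relations $R$ go to zero in $H$. In weight $2$ this is a finite computation. The target in weight $2$ consists of graphs with one internal edge and four decorations, together with the loop-order-one corrections coming from $\delta_{glue}$, and the complementary summands $[0],[1^2],[2^2]$ are precisely what survives. This step is routine but sign-heavy.

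\emph{Step 3 (the main obstacle): isomorphism for $g\geq 3W$.} I would argue by Koszul duality rather than by a direct computation. Concretely, I would compute the Chevalley--Eilenberg cohomology of $\GC_{(g),1}$ in the range $g\geq 3W$ using invariant theory for $\Sp(2g)$. In that range the $\Sp(2g)$-invariants, and more precisely the isotypic pieces of bounded size, of tensor powers of $H_1$ are described by perfect matchings. This should let me rewrite the Chevalley--Eilenberg complex as a complex of graphs in which decorations are glued in all possible ways. I expect that complex to be quasi-isomorphic to the $\Sp$-equivariant stable model for the cohomology of the Torelli group and embedding spaces computed by Kupers--Randal-Williams. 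That answer is a quadratic algebra generated by $[1^3]$ (plus $[1]$ in the punctured case), and it is Koszul in the stable range. Koszulness gives two consequences at once: $H(\GC)$ is generated in weight $1$ with quadratic relations in the range, and those relations are the Koszul dual ones, namely $R$. These are exactly the statements needed for \eqref{equ:tgmap} to be an isomorphism. The delicate point is controlling the range. The invariant-theory identification of a weight $W$ piece requires roughly $3W$ copies of $H_1$ to be independent, which is where the bound $g\geq 3W$ should come from. If matching the Koszul dual directly proves too hard, my fallback is to show surjectivity, i.e.\ that $H^0$ is generated in weight $1$, via the loop order spectral sequence: tree-level classes are iterated brackets, and higher loop classes are hit by brackets through $\delta_{glue}$. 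I would then obtain injectivity by comparing dimensions with the Euler characteristic count from Step 1.
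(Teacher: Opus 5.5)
\textbf{The gap: degree-$0$ concentration for $g\geq W+2$.} The last clause of the statement is not proved. In Step 1 you claim that the loop-order $E_1$ page lies in degree $0$ once $g\geq W+2$. This is false for every $g$. The $E_1$ page is $\bigoplus_\ell H(\GC_{(g),1}^{=\ell})$, i.e.\ hairy graph cohomology $H(\HGC_2^{=\ell}(r))\otimes_{S_r}H_1(\Sigma_g)^{\otimes r}$. Its degree distribution does not depend on $g$, and it has positive-degree classes in essentially every weight. The paper itself records examples: $\gr^3H^1(\GC_{(g),1}^{=1})=[3]_{\Sp}$ (one-loop dihedral words), $\gr^4H^2(\GC_{(g),1}^{=2})=[1^2]_{\Sp}\oplus[0]_{\Sp}$ and $\gr^4H^3(\GC_{(g),1}^{=3})=[0]_{\Sp}$. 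Concentration of $H(\GC_{(g),1})$ in degree $0$ is a statement about the higher differentials of this spectral sequence. Every positive-degree $E_1$ class must die, either by being hit from the tree part $\SympDer$ through a (higher) Enomoto--Satoh trace or by supporting a nonzero differential. Counting the finitely many pairs $(h,r)$ with $2h+r\leq W+2$ cannot produce this: that finiteness says nothing about degrees, and $g$ never enters your bookkeeping.

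\textbf{What is needed instead.} You need a separate argument covering $W+2\leq g<3W$. Step 3, if carried out, gives degree-$0$ concentration only where the Chevalley--Eilenberg computation is stable, roughly $g\geq 3W$, because Koszulness forces the Koszul dual Lie algebra into degree $0$. To reach $g\geq W+2$ you need a stability input. For instance, show that in this range the multiplicity of each $[\mu]_{\Sp(2g)}$ in $H^k(\gr^W\GC_{(g),1})$ is independent of $g$. This is plausible for the following reasons:
\begin{itemize}
\item weight $W$ only involves $H_1^{\otimes r}$ with $r\leq W+2$;
\item $\delta_{split}$ does not see $g$;
\item $\delta_{glue}$ only contracts with the pairing and never inserts it.
\end{itemize}
So the complex is the specialization of a $g$-independent complex of bounded-degree algebraic $\Sp(\infty)$-representations. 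The vanishing for $k\neq0$ then transfers down from large $g$.

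\textbf{Smaller points.}
\begin{itemize}
\item Your fallback in Step 3 does not work. The tree classes in $\SympDer$ are not iterated brackets of weight-one classes; that failure is exactly the Johnson cokernel. An Euler characteristic count gives $\dim H^0$ only once degree-$0$ concentration and $\dim\gr^W\ft$ are already known.
\item For non-negativity, the input you need is Harer's bound: top-weight cohomology of $\mathcal M_{h,r}$ vanishes \emph{above} the vcd, not below it.
\end{itemize}

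Apart from these issues, your non-negativity argument and your Koszul-duality route (stable Chevalley--Eilenberg cohomology, Koszul Kupers--Randal-Williams algebra) match the approach of \cite{FNW}. The present paper only quotes that result. Note, however, that your Step 3 leaves the main technical content, the identification of the stable Chevalley--Eilenberg cohomology, as an expectation rather than an argument.
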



\section{The loop filtration on graph complexes and the Johnson homomorphism}
\label{sec:johnson}

\subsection{Definition and stable injectivity}
\label{sec:injective}
The dg Lie algebras $\GC_{(g)}^{\ex}$, $\GC_{(g),1}$ and $\GC_{(g),1}^\tp$ each carry a natural complete descending filtration by the loop order of graphs.
We shall denote by 
\begin{align*}
    \GC_{(g)}^{\ex,\geq \ell} &\subset \GC_{(g)}^{\ex}
    &
    \GC_{(g),1}^{\geq \ell}&\subset \GC_{(g),1}
    &
    \GC_{(g),1}^{\tp,\geq \ell}&\subset \GC_{(g),1}^\tp
\end{align*}
the dg Lie subalgebras spanned by graphs of loop order $\geq \ell$.\footnote{For the case $\GC_{(g)}^{\ex,\geq \ell}$ we declare that the elements of $\osp_g^{nil}\subset \GC_{(g)}^{\ex}$ have loop order zero. }
Similarly, we denote the corresponding quotient Lie algebras by 
\begin{align*}
    \GC_{(g)}^{\ex,\leq \ell} &:= \GC_{(g)}^{\ex} / \GC_{(g)}^{\ex,\geq \ell+1}
    &
    \GC_{(g),1}^{\leq \ell} &:= \GC_{(g),1}/\GC_{(g),1}^{\geq \ell+1}
    &
    \GC_{(g),1}^{\tp,\leq \ell}&\:= \GC_{(g),1}^\tp
    /
    \GC_{(g),1}^{\tp,\geq \ell+1}
    \\
    \GC_{(g)}^{\ex,= \ell} &:= \GC_{(g)}^{\ex,\geq \ell} / \GC_{(g)}^{\ex,\geq \ell+1}
    &
    \GC_{(g),1}^{= \ell} &:= \GC_{(g),1}^{\geq \ell}/\GC_{(g),1}^{\geq \ell+1}
    &
    \GC_{(g),1}^{\tp,= \ell}&\:= \GC_{(g),1}^{\tp,\geq \ell}
    /
    \GC_{(g),1}^{\tp,\geq \ell+1}
\end{align*}
These quotient Lie algebras fit naturally into towers 
\begin{equation}\label{equ:towers}
\begin{aligned}
    \GC_{(g)}^{\ex} &\to \cdots 
    \to \GC_{(g)}^{\ex,\leq \ell+1} 
    \to \GC_{(g)}^{\ex,\leq \ell} \to \cdots 
    \to \GC_{(g)}^{\ex,\leq 0}
    \\
    \GC_{(g),1} &\to \cdots \to \GC_{(g),1}^{\leq \ell+1} \to \GC_{(g),1}^{\leq \ell}\to \cdots 
    \to \GC_{(g),1}^{\leq 0}
    \\
    \GC_{(g),1}^\tp &\to \cdots
    \to \GC_{(g),1}^{\tp,\leq \ell+1}\to 
    \GC_{(g),1}^{\tp,\leq \ell}
    \to \cdots \to \GC_{(g),1}^{\tp,\leq 0}.
\end{aligned}
\end{equation}

The final objects in these towers are complexes of trees.
In particular, 
$$
\gr^W H(\GC_{(g),1}^{\tp,\leq 0}) = \gr^W  H(\GC_{(g),1}^{\leq 0})=\gr^W  \SympDer
$$
is concentrated in degree zero and identified with the Lie algebra of symplectic derivations for all $W\geq 1$.

\begin{defi}
    We call the morphisms of graded Lie algebras
    \begin{align*}
        \begin{aligned}
        J_g : \ft_{(g)}&\to H(\GC_{(g)}^{\ex,\leq 0}) \\
        J_{g,1} : \ft_{(g),1}&\to H(\GC_{(g),1}^{\leq 0})  \\
        J_{g,1}^\tp: \ft_{(g),1}^\tp&\to H(\GC_{(g),1}^{\tp,\leq 0})
\end{aligned}
    \end{align*}
    obtained as the composition of \eqref{equ:tgmap} and \eqref{equ:towers} the Johnson-homomorphisms.
\end{defi}

\begin{prop}
    The morphisms $H^0(\GC_{(g),1}^{\leq \ell+1}) \to \GC_{(g),1}^{\leq \ell}$ and $H^0(\GC_{(g),1}^{\tp,\leq \ell+1})\to 
    H^0(\GC_{(g),1}^{\tp,\leq \ell})$ induced by \eqref{equ:towers} are monomorphisms.
\end{prop}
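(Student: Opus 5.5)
We argue with the long exact sequence of the short exact sequence of dg Lie algebras (really, of complexes)
\[
0\to \GC_{(g),1}^{=\ell+1}\to \GC_{(g),1}^{\leq \ell+1}\to \GC_{(g),1}^{\leq \ell}\to 0 ,
\]
and similarly in the tadpole case. The relevant portion is
\[
H^0(\GC_{(g),1}^{=\ell+1})\to H^0(\GC_{(g),1}^{\leq \ell+1})\to H^0(\GC_{(g),1}^{\leq \ell}).
\]
Hence the map in question is a monomorphism as soon as $H^0(\GC_{(g),1}^{=\ell+1})=0$, or at least as soon as the map $H^0(\GC_{(g),1}^{=\ell+1})\to H^0(\GC_{(g),1}^{\leq \ell+1})$ vanishes. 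The plan is therefore to show that the associated graded pieces of the loop-order filtration have no cohomology in degree $0$.

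First we identify the complex $\GC_{(g),1}^{=\ell+1}$. On the associated graded, the piece $\delta_{glue}$ of the differential increases the loop order by one, since it adds an edge without adding a vertex. It therefore vanishes on $\GC^{=\ell+1}$, and only $\delta_{split}$ survives, which preserves loop order. So $\GC_{(g),1}^{=\ell+1}$ is the complex of graphs of fixed loop order $\ell+1$ with $H_1$-decorated vertices, equipped with the vertex-splitting differential alone. This is a (hairy) Kontsevich-type graph complex: each decoration may be viewed as a hair labelled by $H_1(\Sigma_{g,1})$, so the complex is $\bigoplus_r (\HGC^{(\ell+1)}(r)\otimes H_1^{\otimes r})_{S_r}$, up to degree shifts.

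Next comes the degree count. With the degree conventions of \cite{FNW}, which we expect to give a graph degree of the form $(\#\text{edges})-(\#\text{vertices})$ plus contributions from the decorations, normalized so that the trivalent one-vertex tree with three decorations sits in degree $0$, a loop-order-$(\ell+1)\geq 1$ graph in degree $0$ has to be a ``maximally split'', i.e.\ trivalent, configuration. The known vanishing results for hairy graph cohomology say that the cohomology of the fixed-loop-order hairy complex is concentrated in degrees strictly above the trivalent bottom degree whenever the loop order is at least $1$; there the bottom degree is spanned by trivalent graphs, and these are all hit by, or fail to be cocycles for, the splitting of four-valent vertices. We would check this directly in the bottom degree: a degree-$0$ cocycle is a linear combination of trivalent graphs subject to IHX-type relations. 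Equivalently, the degree-$0$ part of $\GC^{=\ell+1}$ is the lowest degree of the complex. The splitting differential maps into it only from graphs of lower degree, of which there are none, so the cocycle condition is imposed by the differential out of degree $0$. This is the main obstacle: the differential goes from trivalent graphs to nothing, so degree $0$ cohomology is simply the space of trivalent graphs, which is nonzero.

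The correct direction is therefore to use the differential \emph{into} degree $0$: the cocycles $H^0(\GC^{\leq \ell+1})$ lift, and we need the connecting map rather than vanishing. The plan is to instead show that $H^{-1}(\GC_{(g),1}^{\leq \ell})\to H^0(\GC_{(g),1}^{=\ell+1})$ is surjective. The simplest route is to prove inductively that $H^{<0}(\GC_{(g),1}^{\leq \ell})=0$ and $H^{<0}(\GC^{=\ell})=0$. Then we use the positivity from Theorem~\ref{thm:FNW}, together with the fact that graphs live in nonnegative degrees, so that the whole complexes $\GC^{\leq \ell}$ are concentrated in degrees $\geq 0$. Then $H^0(\GC^{\leq\ell+1})=Z^0(\GC^{\leq\ell+1})$ is just the cocycles, i.e.\ a subspace of the degree-$0$ chains. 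On the chain level, the kernel of $Z^0(\GC^{\leq\ell+1})\to Z^0(\GC^{\leq\ell})$ consists of cocycles that are pure loop order $\ell+1$ trivalent combinations $x$ with $\delta x=0$ in $\GC^{\leq \ell+1}$. Since $\delta_{glue}x$ has loop order $\ell+2$, it is zero in the quotient, so the argument reduces to showing that no nonzero pure loop-order-$(\ell+1)$ trivalent combination is $\delta_{split}$-closed. We would prove this last point by exhibiting, for any such $x$, a splitting of a vertex carrying a decoration: a vertex with a hair still has valence at least $3$ and can be split into a decorated bivalent-type term. Showing that these terms do not cancel is the expected main obstacle, and we would attack it via the standard identification of $\delta_{split}$-cohomology in bottom degree with the vanishing of $H^{bottom}$ of the hairy graph complex for loop order $\geq 1$.
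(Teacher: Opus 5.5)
There is a genuine gap. Your first step is the right one, and it is exactly the paper's: from the long exact sequence of $0\to\GC_{(g),1}^{=\ell+1}\to\GC_{(g),1}^{\leq\ell+1}\to\GC_{(g),1}^{\leq\ell}\to 0$ it suffices to show $H^0(\GC_{(g),1}^{=\ell+1})=0$. You then abandon this because your degree count is inverted.

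The paper's conventions are: the tripod sits in degree $0$, both $\delta_{split}$ and $\delta_{glue}$ have degree $+1$, and the bracket has degree $0$. These force a graph with $v$ vertices, $e$ edges and $r$ decorations in $H_1$ to have degree $2v-e-r+1$. So at fixed loop order, splitting moves \emph{up} in degree. Trivalent graphs of loop order $\ell+1$ therefore sit in the \emph{top} degree $\ell+1$: trivalent trees in degree $0$, trivalent wheels (the dihedral words) in degree $1$, and the tetrahedron in $\gr^4H^3(\GC_{(g),1}^{=3})$ in degree $3$. There are plenty of chains in degree $0$ and in negative degrees, namely graphs with fewer, higher-valence vertices. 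For example, the weight-$6$ two-loop class at the end of Section~\ref{sec:johnson} has a $4$-valent vertex and lives in degree $1$.

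Every later step inherits this error:
\begin{enumerate}
\item $H^0(\GC_{(g),1}^{=\ell+1})$ is not the space of trivalent graphs.
\item $\GC_{(g),1}^{\leq\ell}$ is not concentrated in nonnegative degrees at chain level; Theorem~\ref{thm:FNW} is a statement about cohomology only. Hence $H^0\neq Z^0$.
\item Your final claim is false. A trivalent graph is automatically $\delta_{split}$-closed, since splitting would create a bivalent vertex, which is not allowed. Nonzero top-degree classes such as $\gr^3H^1(\GC_{(g),1}^{=1})=[3]_{\Sp}$ are represented by exactly such closed trivalent combinations.
\end{enumerate}

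Your fallback is also circular. Once you have $H^{-1}(\GC_{(g),1}^{\leq\ell})=0$, surjectivity of the connecting map $H^{-1}(\GC_{(g),1}^{\leq\ell})\to H^0(\GC_{(g),1}^{=\ell+1})$ is literally the statement $H^0(\GC_{(g),1}^{=\ell+1})=0$. That is the very vanishing you had concluded fails.

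So the missing ingredient is precisely this cohomological vanishing, and no bottom-degree chain-level argument will produce it. The paper imports it from the degree bounds of \cite[Section 6]{FNW} for the fixed-loop-order complexes. Up to shift, these are hairy graph complexes $\HGC_2^{=\ell+1}(r)\otimes_{S_r}H_1(\Sigma_g)^{\otimes r}$. Their cohomology vanishes in negative degrees, and the degree bound can only be attained by trees. Hence $H^k(\GC_{(g),1}^{=\ell+1})=0$ for $k\leq 0$ whenever $\ell\geq 0$. The long exact sequence then gives injectivity, and the tadpole case is identical.
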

\begin{proof}
We have a short exact sequence 
\[
0\to \GC_{(g),1}^{= \ell+1} \to \GC_{(g),1}^{\leq \ell+1}
\to \GC_{(g),1}^{\leq \ell} \to 0.
\]
Degree bounds for the cohomology $H(\GC_{(g),1}^{=\ell+1})$ have been derived in \cite[Section 6]{FNW}, with the result that 
\[
H^k(\GC_{(g),1}^{= \ell+1}) = 0 \quad \text{for $k< 0$}.
\]
Furthermore, it is has also been remarked in loc. cit. (see the proof of \cite[Proposition 24]{FNW}) that the degree bound can only be satisfied by tree graphs. Hence for $\ell \geq 0$ we have in fact the improved statement
\begin{equation}\label{equ:eq ell van}
H^k(\GC_{(g),1}^{= \ell+1}) = 0 \quad \text{for $k\leq 0$}.
\end{equation}
The proposition then follows immediately from the long exact sequence in cohomology.
For $\GC_{(g),1}^{\tp}$ the argument is analogous.
\end{proof}

\begin{cor}\label{cor:injective}
    The Johnson homomorphisms $J_{g,1}$ and $J_{g,1}^\tp$ are injective in weight $W$ as long as $g\geq 3W\geq 6$.
\end{cor}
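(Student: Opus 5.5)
By Theorem \ref{thm:FNW}, in weight $W$ with $g\geq 3W\geq 6$ the morphism $\ft_{(g),1}\to H(\GC_{(g),1})$ is an isomorphism onto the cohomology in weight $W$. That cohomology is concentrated in degree $0$, since $g\geq 3W\geq W+2$ for $W\geq 1$. So $\gr^W\ft_{(g),1}\cong \gr^W H^0(\GC_{(g),1})$. It therefore suffices to show that the tower map $\gr^W H^0(\GC_{(g),1})\to \gr^W H^0(\GC_{(g),1}^{\leq 0})$ is injective, because $J_{g,1}$ is the composite of this map with the isomorphism above.

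The key observation is that, in a fixed weight $W$, the loop order of a graph is bounded. Write $W=2(e-v)+D$ with $D$ the number of decorations (all of degree $1$) and $\ell=e-v+1$. Then $W=2(\ell-1)+D$. Trivalence forces $D\geq 0$, and $D=0$ is possible only for closed graphs, so in any case $\ell\leq W/2+1$. Hence the loop filtration is finite in each weight, and $\gr^W\GC_{(g),1}=\gr^W\GC_{(g),1}^{\leq L}$ for $L$ large (e.g.\ $L=W$). The map $\gr^W H^0(\GC_{(g),1})\to\gr^W H^0(\GC_{(g),1}^{\leq 0})$ then factors as a finite composite
\[
\gr^W H^0(\GC_{(g),1}^{\leq L})\to \gr^W H^0(\GC_{(g),1}^{\leq L-1})\to\cdots\to \gr^W H^0(\GC_{(g),1}^{\leq 0}).
\]
By the preceding Proposition each of these maps is a monomorphism, and all maps respect the weight grading. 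So the composite is injective, and $J_{g,1}$ is injective in weight $W$.

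For $J_{g,1}^\tp$ I would argue identically, using the tadpole part of Theorem \ref{thm:FNW} and the tadpole version of the Proposition. Lemma \ref{lem:isom} could serve as a cross-check but is not needed.

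The only step needing care is the finiteness of the loop filtration in fixed weight. Without it one would need a limit argument: $H^0$ of the inverse limit versus the limit of $H^0$, handled via Mittag-Leffler and finite-dimensionality. The weight count above avoids this, since each weight piece is finite dimensional with bounded loop order. Note also that this argument gives injectivity only for $W\geq 2$; Theorem \ref{thm:inj intro} additionally covers $W=1$, where injectivity is immediate from the definition of $J_{g,1}$ on generators.
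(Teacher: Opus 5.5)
Your proof is correct and follows the paper's intended argument. $J_{g,1}$ factors as the weight-$W$ isomorphism of Theorem~\ref{thm:FNW} followed by the tower maps on $H^0$, each of which is injective by the preceding Proposition. Because $\ell\leq W/2+1$ in weight $W$, this composite is finite. One small correction: $D\geq 0$ holds automatically since $D$ counts decorations, not because of trivalence, but this does not affect the argument.
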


In the following, we shall focus our attention to the Johnson homomorphism $J_{g,1}$. By Lemma \ref{lem:isom} the case of $J_{g,1}^\tp$ is treated almost identically. The case of $J_g$ is left for future work. (We expect very similar results.)

\subsection{The spectral sequence and higher Enomoto-Satoh traces}
Next we study the image and the cokernel of the Johnson homomorphism.
This can be studied nicely using the spectral sequence associated to the loop order filtration.
Then the degree-zero-part of the $\ell$-th page of the spectral sequence is $H^0(\GC_{(g),1}^{\leq \ell})$.
The differential on the $\ell$-th page is a map 
\[
ES_{g,1,\ell} : 
H^0(\GC_{(g),1}^{\leq \ell})
\to 
X_{g,1,\ell+1} \subset H^1(\GC_{(g),1}^{= \ell}),
\]
where we denote by $X_{g,1,\ell+1}$ the degree 1 part of the $\ell$-th page of the spectral sequence. From the vanishing \eqref{equ:eq ell van} we in fact see that this is a subspace of the more easily computable space $H^1(\GC_{(g),1}^{= \ell})$, and hence it is convenient to directly define the maps 
\[
ES_{g,1,\ell} : 
H^0(\GC_{(g),1}^{\leq \ell})
\to 
 H^1(\GC_{(g),1}^{= \ell})
\]
to take values in that space.

\begin{defi}
    We call the morphism $ES_{g,1,\ell}$ the $\ell$-th Enomoto-Satoh trace.
\end{defi}

We have that 
\[
H^0(\GC_{(g),1}^{\leq \ell}) = \ker ES_{g,1,\ell}, 
\]
and by convergence of the spectral sequence 
\[
H^0(\GC_{(g),1})
=
\cap_{\ell \geq 0}
\ker ES_{g,1,\ell}.
\]

So in the cases that the morphism \eqref{equ:tgmap} is a quasi-isomorphism in degree zero (including in particular $g\geq 3W$), we have that the image of the Johnson homomorphism is precisely the joint kernel of all the higher Enomoto-Satoh traces. 
Or put differently, the Johnson cokernel may be identified as a vector space with the union of the images of the higher Enomoto-Satoh traces.

Let us more explicitly work out the definition.
Suppose that we start with some linear combination of trees $\Gamma\in \GC_{(g),1}^{\leq 0}$ that is closed, i.e., $\delta_{split}\Gamma=0$. It represents a cohomology class (i.e., an element of the symplectic derivation Lie algebra) $[\Gamma]$, and we desire to compute its Enomoto-Satoh trace(s).

The first (and classical) Enomoto-Satoh trace is just the application of the part of the differential $\delta_{glue}$ that raises the loop order by one:
\[
ES_{g,1,0} (\Gamma) =\delta_{glue}\Gamma.
\]
More precisely, $\delta_{glue}\Gamma$ is a $\delta_{split}$-cocycle in the $1$-loop graph complex $\GC_{(g),1}^{=1}$, and the cohomology class represented by this element is the image of $[\Gamma]$ under $ES_{g,1,0}$. We also note that $H(\GC_{(g),1}^{=1},\delta_{split})$ is easily evaluated and is the same as the dihedral words in $H_1(\Sigma_g)$ (with certain signs).

Next suppose that our cocycle $[\Gamma]$ is in fact sent to zero under $ES_{g,1,0}$. This means that there is some linear combination $\Gamma_1\in \GC_{(g),1}^{=1}$ of graphs of loop order 1 such that 
\[
\delta_{glue}\Gamma + \delta_{split} \Gamma_1 =0.
\]
Then the second Enomoto-Satoh trace is defined as 
\[
ES_{g,1,1}(\Gamma) = [\delta_{glue} \Gamma_1]
\in 
H^1(\GC_{(g),1}^{=2}).
\]
Continuing further, if $ES_{g,1,1}(\Gamma)$ were to vanish, i.e., there is some linear combination $\Gamma_2$ of graphs of loop order 2 such that 
\[
\delta_{glue}\Gamma_1 + \delta_{split} \Gamma_2 =0,
\]
then the next higher Enomoto-Satoh trace is well-defined and represented by 
\[
ES_{g,1,2}(\Gamma) = [\delta_{glue} \Gamma_2] \in H^1(\GC_{(g),1}^{=3}),
\]
and so forth.

\subsection{The Johnson cokernel and proof of Theorem \ref{thm:intro euler}}
\label{sec:euler proof}
The Johnson cokernel is the cokernel of the Johnson homomorphism.
From Theorem \ref{thm:FNW} we directly obtain a graph complex model.
\begin{cor}\label{cor:coker}
    For $g\geq 3W\geq 6$ there are isomorphisms 
    \begin{equation}\label{equ:coker GC}
        \begin{aligned}
        \gr^W \coker J_{g,1} &\cong \gr^W H^1(\GC_{(g),1}^{\geq 1})  \\
        \gr^W \coker J_{g,1}^\tp &\cong \gr^W H^1(\GC_{(g),1}^{\tp,\geq 1})
        \end{aligned}.
    \end{equation}
    Furthermore, in this case the cohomology of 
    $\gr^W \GC_{(g),1}^{\geq 1}$ and $\gr^W \GC_{(g),1}^{\tp,\geq 1}$ is concentrated in degree one.
\end{cor}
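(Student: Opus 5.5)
We use the short exact sequence of complexes (compatible with the weight grading)
\[
0\to \GC_{(g),1}^{\geq 1}\to \GC_{(g),1}\to \GC_{(g),1}^{\leq 0}\to 0
\]
and its long exact sequence in cohomology, fixing a weight $W$ with $g\geq 3W\geq 6$. The tree complex $\GC_{(g),1}^{\leq 0}$ has cohomology $\gr^W\SympDer$ concentrated in degree zero, as recalled above. By Theorem \ref{thm:FNW}, $\gr^W H(\GC_{(g),1})$ is concentrated in degree $0$ (note $g\geq 3W\geq W+2$ since $W\geq 2$) and is identified with $\gr^W\ft_{(g),1}$. By construction, the composite $\ft_{(g),1}\to H^0(\GC_{(g),1})\to H^0(\GC_{(g),1}^{\leq 0})$ is $J_{g,1}$.

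Next we need $H^k(\GC_{(g),1}^{\geq 1})=0$ for $k\leq 0$. The complex $\GC_{(g),1}^{\geq 1}$ is complete with respect to the loop order filtration, with associated graded pieces $\GC_{(g),1}^{=\ell+1}$ for $\ell\geq 0$. By \eqref{equ:eq ell van} each of these has vanishing cohomology in degrees $\leq 0$. In fixed weight the complex is finite-dimensional, so the loop order filtration is finite in weight $W$. An induction over the finitely many loop orders, using the long exact sequences of $0\to \GC^{=\ell}\to \GC^{\geq \ell}\to\GC^{\geq\ell+1}\to 0$ (or equivalently the convergent spectral sequence), then gives $\gr^W H^k(\GC_{(g),1}^{\geq 1})=0$ for $k\leq 0$.

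With these inputs the long exact sequence in weight $W$ reads
\[
0=H^0(\GC^{\geq1})\to H^0(\GC)\xrightarrow{J_{g,1}} \gr^W\SympDer\to H^1(\GC^{\geq 1})\to H^1(\GC)=0.
\]
This gives $\gr^W\coker J_{g,1}\cong \gr^W H^1(\GC_{(g),1}^{\geq 1})$. For $k\geq 2$, the segment $H^{k-1}(\GC^{\leq 0})=0\to H^k(\GC^{\geq1})\to H^k(\GC)=0$ yields vanishing. Together with the vanishing in degrees $\leq 0$, this shows the cohomology is concentrated in degree one.

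The tadpole case is identical, since all inputs have tadpole analogues. The tree part $\GC_{(g),1}^{\tp,\leq 0}$ again computes $\SympDer$, and Theorem \ref{thm:FNW} covers $\GC_{(g),1}^\tp$. The vanishing \eqref{equ:eq ell van} holds for the tadpole version, as noted in the proof of the preceding proposition. The main point to check carefully is the degree-bound step \eqref{equ:eq ell van} for all loop orders $\ell+1\geq 1$. Beyond that, one must confirm that the induction over the loop filtration is legitimate, which follows from finite-dimensionality in each weight.
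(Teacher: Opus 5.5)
Your proof is correct and is essentially the argument the paper has in mind: the long exact sequence for $0\to\GC_{(g),1}^{\geq 1}\to\GC_{(g),1}\to\GC_{(g),1}^{\leq 0}\to 0$, combined with Theorem \ref{thm:FNW} and the vanishing \eqref{equ:eq ell van}; for the degree-zero part this is equivalent to the injectivity in Corollary \ref{cor:injective}. One small slip: the filtration sequence should read $0\to \GC^{\geq \ell+1}\to \GC^{\geq \ell}\to\GC^{=\ell}\to 0$, since $\delta_{glue}$ raises loop order and so the higher-loop part is the subcomplex, but your induction goes through unchanged.
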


With this result we may finally prove Theorem \ref{thm:intro euler} from the introduction.

\begin{proof}[Proof of Theorem \ref{thm:intro euler}]
Fix $W\geq 2$ and $g\geq 3W$, and for a Young diagram $\mu$ define the number $M_\mu^W$ as the multiplicity of the representation $[\mu]_{\Sp(2g)}$ inside $\gr^W \coker J_{g,1}$. We then desire to show that \eqref{equ:MmuW} holds.

Let $\beta_\mu^{W,k,\ell}$ be the multiplicity of $[\mu]_{\Sp(2g)}$ in the subspace of cohomological degree $k$ 
in $\gr^W \GC_{(g),1}^{=\ell}$. Then by Corollary \ref{cor:coker}, in particular since the cohomology is concentrated in degree 1, we have
\[
M_\mu^W = -\sum_{1\leq \ell \leq \frac W2+1} \underbrace{\sum_k (-1)^k \beta_\mu^{W,k,\ell}}_{=:\xi_{\mu}^{W,\ell}},
\]
where $\xi_{\mu}^{W,\ell}$ can be considered as the $\mu$-equivariant Euler characteristic of $\gr^W \GC_{(g),1}^{=\ell}$.

The latter is in fact known quite explicitly, using that it can be expressed through the Euler characteristic of the Kontsevich commutative hairy graph complex, which in turn computes the top weight part of the cohomology of moduli spaces. Let us explain this link, referring also to \cite[Section 6]{FNW}.
The hairy graph complexes $\HGC_{2}^{=\ell}(r)$ are complexes of graphs defined similarly to $(\GC_{(g),1}^{=\ell},\delta_{split})$, except that in place of the decorations by $H_1(\Sigma_g)$ one has $r$ hairs that are numbered $1,\dots,r$, and the degree is shifted, so that
\[
\gr^{2\ell+r-2} \GC_{(g),1}^{=\ell} [1]
\cong 
\HGC_{2}^{=\ell}(r)
\otimes_{S_r} (H_1(\Sigma_g))^{\otimes r}.
\]
Now by the result of \cite{CGP2} the same complexes $\HGC_{2}^{=\ell}(r)$ compute the compactly supported weight zero cohomology of the moduli spaces of curves $\mathcal M_{\ell,r}$, so that we have, as $S_r$-module
\[
\gr^0 H^k_c(\mathcal M_{\ell,r}) \cong 
H^{k-2\ell-r}( \HGC_{2}^{=\ell}(r) ) \otimes \sgn_{r}.
\]
The left-hand side is also equal to (the dual space of) $\gr^{top} H^{6\ell -6+2r-k}(\mathcal M_{\ell,r})$. Hence for a Young diagram $\lambda$ with $|\lambda|=r$ the $\lambda$-equivariant Euler characteristic\footnote{By this we mean the alternating sum of the multiplicities of $[\lambda]_{S_r}$ in the various degree components.} of $\gr^W\HGC_{2}^{=\ell}(r)$ is identified with 
\[
-(-1)^r \chi_{\ell,\lambda^T},
\]
with $\chi_{\ell,\sigma}$ as in \eqref{equ:chidef}.

By Schur-Weyl duality we also know that the $S_r\times \GL(2g)$-representation $(H_1(\Sigma_g))^{\otimes r}$ is expressed through irreducibles as 
\[
(H_1(\Sigma_g))^{\otimes r} \cong 
\bigoplus_{\lambda\atop |\lambda|=r} [\lambda]_{\GL(2g)} \otimes [\lambda]_{S_r}, 
\]
where the sum is over all Young diagrams with $r$ boxes. (In the unstable situation we would additionally restrict to Young diagrams with $\leq 2g$ rows, but here this condition can be omitted.)
Putting everything together we find that 
\[
M_\mu^W = -\sum_{1\leq \ell \leq \frac W2+1}\sum_k (-1)^k \beta_\mu^{W,k,\ell}
=
-\sum_{1\leq \ell \leq \frac W2+1}
\sum_{\lambda \atop |\lambda|+2(\ell-1)=W}
N_{\mu\lambda}
(-(-1)^{|\lambda|} \chi_{\ell,\lambda^T})
=
\sum_{1\leq \ell \leq \frac W2+1}
\sum_{\lambda \atop |\lambda|+2(\ell-1)=W}
(-1)^{|\lambda|} N_{\mu\lambda^T} \chi_{\ell,\lambda}
\]
as desired.
\end{proof}

The Jonhson cokernel in weights $\leq 6$ will be computed more explicitly as an illustration in the following next section. Using the above formula we moreover obtain 
\begin{align*}
\gr^7 \coker J_{g,1} &=
    2[52]_\Sp \oplus 6[41]_\Sp \oplus 11[3]_\Sp \oplus [7]_\Sp \oplus [43]_\Sp \oplus 6[32]_\Sp \oplus 13[21]_\Sp \oplus 4[1]_\Sp \oplus 3[421]_\Sp \oplus 11[31^2]_\Sp
    \\&\quad\quad\quad \oplus [41^3]_\Sp \oplus 3[32^2]_\Sp \oplus 6[2^21]_\Sp \oplus 2[321^2]_\Sp \oplus 5[21^3]_\Sp \oplus 3[1^3]_\Sp \oplus 3[31^4]_\Sp \oplus [2^31]_\Sp 
    \\
  \gr^8 \coker J_{g,1} &=  2[53]_\Sp \oplus 17[42]_\Sp \oplus 38[31]_\Sp \oplus 27[2]_\Sp \oplus 3[61^2]_\Sp \oplus 2[6]_\Sp \oplus 8[51]_\Sp \oplus 4[521]_\Sp \oplus 15[41^2]_\Sp
  \\&\quad\quad\quad 
  \oplus 7[4]_\Sp \oplus 2[51^3]_\Sp \oplus 5[431]_\Sp \oplus 6[3^2]_\Sp \oplus 27[321]_\Sp \oplus 17[2^2]_\Sp \oplus 40[21^2]_\Sp \oplus 17[1^2]_\Sp 
  \\&\quad\quad\quad 
  \oplus [42^2]_\Sp \oplus 7[421^2]_\Sp \oplus 18[31^3]_\Sp \oplus [41^4]_\Sp \oplus 4[3^22]_\Sp \oplus 10[2^3]_\Sp \oplus 3[3^21^2]_\Sp \oplus 13[2^21^2]_\Sp
  \\&\quad\quad\quad 
  \oplus 9[1^4]_\Sp \oplus 4[32^21]_\Sp \oplus 4[321^3]_\Sp \oplus 9[21^4]_\Sp \oplus [31^5]_\Sp \oplus 3[2^31^2]_\Sp \oplus [2^21^4]_\Sp \oplus [1^6]_\Sp \oplus [21^6]_\Sp \oplus [0]_\Sp 
\end{align*}

The degree-7 part (and the degree-8 part up to one summand) was previously computed by Morita, Sakasai and Suzuki and presented at the Johnson Homomorphism Workshop (2019) \cite{MSS2019}.

The numbers $\chi_{g,\lambda}$ can be computed numerically on a laptop at least up to weights $>20$ from the formulas of \cite{CFGP,BorinskyZagier, ST2}, so the Johnson cokernel can also be computed significantly further by our methods, but the results become too bulky to reproduce here.

\subsection{Explicit example: The Johnson cokernel and ES traces in weights $\leq 6$}
To exemplify our constructions more explicitly, let us compute the Johnson cokernel up to weight 6 (recovering the computations in \cite{MSS15}). We will use the loop order spectral sequence on $\GC_{(g),1}^{\geq 1}$ to compute $H^1 (\GC_{(g),1}^{\geq 1})$. 
By definition of the weight it is clear that only loop orders $\ell\leq \frac W2+1$ can contribute.
The 1-loop cohomology $H(\GC_{(g),1}^{=1})$ is concentrated in degree 1 and identified with dihedral words in $H_1(\Sigma_g)$.
Explicitly, let us record here the decomposition into irreps of the symplectic group:
\begin{align*}
    \gr^3 H^1(\GC_{(g),1}^{=1}) &= [3]_{\Sp} 
    \\
    \gr^4 H^1(\GC_{(g),1}^{=1}) 
    &= [21^2]_{\Sp} \oplus [2]_{\Sp} \oplus [1^2]_{\Sp}
    \\
    \gr^5 H^1(\GC_{(g),1}^{=1}) &= 
    [5]_{\Sp} \oplus [32]_{\Sp}
    \oplus [2^21]_{\Sp}\oplus 2[21]_{\Sp}
    \oplus [1^5]_{\Sp}
    \oplus 2[1^3]_{\Sp}\oplus 3[1]_{\Sp}
    \\
    \gr^6 H^1(\GC_{(g),1}^{=1}) &= 
      2[41^2]_\Sp \oplus [3^2]_\Sp \oplus [321]_\Sp \oplus [31^3]_\Sp \oplus [2^21^2]_\Sp \oplus 2[4]_\Sp \oplus 4[31]_\Sp \oplus 3[2^2]_\Sp \oplus 3[21^2]_\Sp \oplus [1^4]_\Sp 
      \\&\quad \quad \quad
      \oplus 2[2]_\Sp \oplus 4[1^2]_\Sp \oplus 2[0]_\Sp 
    %
\end{align*}

The 2-loop graph cohomology $H^\bullet(\GC_{(g),1}^{=2})$ is currently not fully known, although partial computations exist in the literature \cite{CCTW, BCGH, BrunWillwacher}. For weights $\leq 6$ it can for example be read off from the tables of \cite[Figure 7]{ BrunWillwacher}: 
\begin{align*}
    \gr^4 H^2(\GC_{(g),1}^{=2}) &= [1^2]_{\Sp} \oplus [0]_{\Sp} 
    \\
    \gr^6 H^1(\GC_{(g),1}^{=2}) &= [1^4]_{\Sp} \oplus [1^2]_{\Sp} \oplus [0]_{\Sp}
    \\
    \gr^6 H^2(\GC_{(g),1}^{=2}) &= [31]_{\Sp} \oplus [2]_{\Sp}
\end{align*}
with all other cohomology classes in weight $\leq 6$ vanishing. For loop order 3 we get from \cite{BrunWillwacher}
\begin{align*}
    \gr^4 H^3(\GC_{(g),1}^{=3}) &= [0]_{\Sp}
    &
    \gr^5 H^2(\GC_{(g),1}^{=3}) &= [1]_{\Sp} 
\end{align*}
while for loop order 4 the cohomology vanishes identically in weights $\leq 8$.

Since the cohomology must be concentrated in degree 1, the above data suffices to infer the higher differentials in the spectral sequence, i.e., the (higher) Enomoto-Satoh traces. This yields the following result:
\begin{align*}
    \gr^3 \coker J_{g,1} &= [3]_\Sp \\
    \gr^4 \coker J_{g,1} &= [21^2]_\Sp \oplus [2]_\Sp \\
    \gr^5 \coker J_{g,1} &= [5]_{\Sp} \oplus [32]_{\Sp}
    \oplus [2^21]_{\Sp}\oplus 2[21]_{\Sp}
    \oplus [1^5]_{\Sp}
    \oplus 2[1^3]_{\Sp}\oplus 2[1]_{\Sp}
    \\
    \gr^6 \coker J_{g,1} &= 
    2[41^2]_\Sp
    \oplus [3^2]_\Sp
    \oplus [321]_\Sp
    \oplus [31^3]_\Sp
    \oplus [2^21^2]_\Sp
    \oplus 2[4]_\Sp
    \oplus 3[31]_\Sp
    \oplus 3[2^2]_\Sp
    \oplus 3[21^2]_\Sp 
    \oplus [1^4]_\Sp
    \\&\quad\quad\quad
    \oplus [2]_\Sp
    \oplus 4 [1^2]_\Sp
    \oplus 2[0]_\Sp 
    \oplus \underbrace{[1^4]_\Sp\oplus [1^2]_\Sp\oplus [0]_\Sp}_{\text{from $\ell=2$ loops}}
\end{align*}

Note that for weights $\leq 5$ the only degree 1 cohomology is from the part of loop order 1. Hence there the Johnson cokernel is fully detected by the classical Enomoto-Satoh trace $ES_{g,1,1}$.
The first non-trivial classes in degree 1 of higher loop order ($\ell=2$) occur in weight 6 and are represented by graphs of the form 
\[
\sum_{\sigma\in S_4}(-1)^\sigma
\begin{tikzpicture}
    \node[int,label=90:{$\alpha_{\sigma(1)}$}] (v1) at (0,1) {};
    \node[int,label=0:{$\alpha_{\sigma(2)}$}] (v2) at (1.5,0) {};
    \node[int,label=0:{$\alpha_{\sigma(3)}$}] (v3) at (0,0) {};
    \node[int,label=180:{$\alpha_{\sigma(4)}$}] (v4) at (-1.5,0) {};
    \node[int] (v5) at (0,-1) {};
    \draw (v1) edge (v2) edge (v3) edge (v4) 
    (v5) edge (v2) edge (v3) edge (v4) ;
\end{tikzpicture}\, .
\]
The whole subspace spanned by such graphs is in the image of the second Enomoto-Satoh trace $ES_{g,1,1}$.


\section{Concluding remarks: Diffeomorphism groups and the embedding calculus}
\label{sec:embedding}
Our graphical dg Lie algebras $\GC_{(g)}^{ex}$, $\GC_{(g),1}$ and $\GC_{(g),1}^\tp$ arise naturally from 2-dimensional embedding calculus, as we shall briefly explain here.
Let again $\Sigma_g$ be a genus $g$ Riemann surface, and let $\Sigma_{g,1}$ be a genus $g$ Riemann surface with one boundary component. Our goal is to understand the group $\Diff(\Sigma_g)$ of oriented diffeomorphisms of $\Sigma_g$ and the group $\Diff_{\partial}(\Sigma_{g,1})$ of oriented diffeomorphisms of $\Sigma_{g,1}$ that fix the boundary pointwise. Since $\Sigma_{g,1}$ is furthermore parallelizable, we may also consider framed diffeomorphisms $\Diff_{\partial}^{fr}(\Sigma_{g,1})$.
Furthermore, there exists another related version of the diffeomorphism group of $\Sigma_{g,1}$ that is obtained by dropping the boundary condition, and moreover one can also consider self-embeddings instead of diffeomorphisms. While much (or all) of the theory can be adapted to such situations, we focus on the three base-cases above for simplicity.

Any diffeomorphism of $\Sigma_g$ acts naturally on the Fulton-MacPherson-compactified configuration spaces of framed points $\FM_{\Sigma_g}^{fr}$ of $\Sigma_g$. This action furthermore respects the right action of the framed Fulton-MacPherson operad $\FM_2^{fr}$. In the case of $\Sigma_{g,1}$ one also preserves in additions the action of the version $\FM_{2,1}^{fr}$ of $\FM_{2}^{fr}$ with one marked point, and in the case of framed diffeomorphisms one may drop the framings on the operads. Hence one obtains morphisms of topological monoids
\begin{equation}\label{equ:emb calc}
\begin{aligned}
\Diff(\Sigma_g) &\to \Aut_{\FM_2^{fr}}^h(\FM_{\Sigma_g}^{fr})
\\
\Diff_\partial(\Sigma_{g,1}) &\to \Aut_{\FM_2^{fr}-\FM_{2,1}^{fr} }^h(\FM_{\Sigma_{g,1}}^{fr})
\\
\Diff(\Sigma_{g,1}) &\to \Aut_{\FM_2- \FM_{2,1}}^h(\FM_{\Sigma_{g,1}}).
\end{aligned}
\end{equation}
The main result of \cite{KrannichKupers} (essentially) states that the morphisms above are weak equivalences.
The automorphism groups of configuration spaces (operadic modules) on the right-hand side of \eqref{equ:emb calc} are still difficult to compute in practice. However, one can compute the automorphisms groups of their rationalizations. There are morphisms of topological groups 
\begin{equation}\label{equ:emb calc Q}
\begin{aligned}
\Aut_{\FM_2^{fr, \Q}}^h(\FM_{\Sigma_g}^{fr})
&\to 
\Aut_{\FM_2^{fr}}^h(\FM_{\Sigma_g}^{fr, \Q})
\\
\Aut_{\FM_2^{fr}-\FM_{2,1}^{fr} }^h(\FM_{\Sigma_{g,1}}^{fr})
&\to
\Aut_{\FM_2^{fr,\Q}-\FM_{2,1}^{fr,\Q} }^h(\FM_{\Sigma_{g,1}}^{fr,\Q})
\\
\Aut_{\FM_2, \FM_{2,1}}^h(\FM_{\Sigma_{g,1}})
&\to
\Aut_{\FM_2^\Q, \FM_{2,1}^\Q}^h(\FM_{\Sigma_{g,1}}^\Q).
\end{aligned}
\end{equation}

The right-hand sides of \eqref{equ:emb calc Q} in turn can be computed, see \cite{Matteo, SchwarzWillwacher}, and can be expressed through the exponential groups of our graph complexes.
\begin{equation}\label{equ:emb calc GC}
\begin{aligned}
\Aut_{\FM_2^{fr}}^h(\FM_{\Sigma_g}^{fr, \Q})
&\simeq
\Sp(2g) \ltimes \Exp(\GCex_g)
\\
\Aut_{\FM_2^{fr,\Q}-\FM_{2,1}^{fr,\Q} }^h(\FM_{\Sigma_{g,1}}^{fr,\Q})
&\simeq
\Sp(2g) \ltimes \Exp(\GC_{g,1})
\\
\Aut_{\FM_2^\Q, \FM_{2,1}^\Q}^h(\FM_{\Sigma_{g,1}}^\Q)
&\simeq
\Sp(2g) \ltimes \Exp(\GC_{g,1}^\tp).
\end{aligned}
\end{equation}

These graph complexes can hence be seen as rational-embedding-calculus-avatars of the (respective version of the) Torelli group. In the unstable situation it is at present not yet clear how to make this relation more precise, and whether for example the cohomology of the moduli spaces of curves may be computed from knowledge of these Lie algebras.

\end{document}